\newtheorem{theorem}{Theorem}[]
\newtheorem{proposition}[theorem]{Proposition}
\theoremstyle{definition}
\newtheorem{example}[theorem]{Example}
\theoremstyle{remark}
\numberwithin{equation}{section}
\newcommand{\s}{\mbox{$\ast$}}
\newcommand{\g}{generalized  }
\newcommand{\rr}{Rickart  }
\begin{document}
\setcounter{page}{1}

\title[artinian rings which are not generalized Rickart]{\textbf{artinian rings which are not generalized Rickart}}

\author[ Ali Shahidikia]{ Ali Shahidikia }
\address{Department of Mathematics, Dezful branch, Islamic Azad University, Dezful, Iran}
\email{\textcolor[rgb]{0.00,0.00,0.84}{ali.Shahidikia@iaud.ac.ir}}

\keywords{ artinian ring, group ring,
  \g Rickart $ \ast $-ring,  \g p.p.-ring.}

\begin{abstract}

In this note, we show that there exist  non-unital right artinian rings  which
are not  \g  Rickart. In particular, we
provide  examples to show that, \cite[Corollary 2.31]{ung} is not true for non-unital artinian rings.
\end{abstract}

\maketitle
Throughout this note $R $ denotes an associative ring without unity and $\ast$ is used to indicate an involution on a ring.  An idempotent  element $x\in R$ is called a \textit{projection} if $x^{\ast} =x$. Rickart  \cite{18} in 1946 studied C*-algebras  which satisfy the  condition that the right annihilator of every single element is generated by a projection. Rickart also showed that all von Neumann algebras satisfy this property. These algebras were later named \textit{Rickart C*-algebras} by Kaplansky.\par

Kaplansky in  1950 showed that von Neumann algebras satisfy a stronger annihilator condition, namely, that these are rings
 with identity in which the right annihilator of any nonempty subset is generated by an idempotent. He termed a ring with this property
  a Baer ring to honor  R.~Baer who had studied this condition in 1940. Also, a \s-ring $ R $ is called a Baer  $ \ast $-ring if the
   right annihilator of every nonempty subset  is generated by a projection  as a right ideal. Kaplansky recognized that the notions of a Baer ring and a Baer \s-ring provide a framework to study the algebraic properties
     of operator algebras and each is interesting in its own right. The theory of Baer rings, Baer \s-rings, and AW*-algebras
      (C*-algebras which are Baer \s-rings) is studied in \cite{berb} and \cite{Kap65}.\par

Maeda in 1960  defined a Rickart ring. He called a ring \textit{right (left) Rickart} if the right (left) annihilator of any single element is generated by an idempotent. It is clear that every Baer ring is right and left Rickart. The same year, Hattori  introduced the notion of a \textit{right p.p.}-ring, namely a ring in which every principal right ideal is projective. It was later discovered that a right Rickart ring is precisely
the same as a right p.p.-ring. Also, Berberian in \cite{berb} defined a Rickart \s-ring, if the right  annihilator of any single element is generated by a projection.\par

Recall from \cite{oho}, that a ring $ R $ is  \textit{generalized right}  \textit{principally projective} (\textit{generalized right p.p.} for short) if for any $x\in R$, the right ideal $x^nR$ is projective for some positive integer $  n$, depending on $  x$, or equivalently, if for any element $ x\in  R$, the right annihilator $r_R(x^n)$ is generated by an idempotent for some positive integer $  n$. Left cases may be defined analogously. Generalized p.p.-rings (which are also called generalized Rickart rings) have been studied in \cite{oho},\cite{oho2}, \cite{y} and \cite{x}.\par

In \cite[Theorem 2.30]{ung}, Ungor  et al. show  that  every
finitely generated module over a right artinian ring is $ \pi
$-\textit{Rickart}. They deduce that every right artinian ring is generalized right p.p., see \cite[Corollary 2.31]{ung}.

Now we show that there exist non-unital right artinian rings, which are not  \g
Rickart. In particular, this shows that, \cite[Corollary 2.31]{ung},
is not true for non-unital  artinian rings.

We do not know any example of a unital artinian ring which is not generalized p.p.

\begin{theorem}\label{conter}
 Let $ R $ be a non-unital  right  artinian   ring  with the following properties:
\begin{itemize}
\item[$(i)$] For $ a\in R $ and each positive  integer $ n $, $ 2^na=0 $ implies that $ a=0 $;
\item[$(ii)$] The equation $ 2x^2-x=0 $ has only the trivial solution  in $ R $;
\end{itemize}
and let $S = RG$ be the group ring of the group $G =\{e,g\}$ of
order $ 2 $ over the ring $ R $. Then $ S $ is a right artinian
ring which is not \g right p.p.
\end{theorem}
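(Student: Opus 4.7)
Plan. The plan splits the theorem into two claims: that $S$ is right artinian and that $S$ is not generalized right p.p. The first is routine --- as a right $R$-module, $S = Re \oplus Rg \cong R \oplus R$ is finitely generated, so every descending chain of right ideals of $S$ is in particular a descending chain of right $R$-submodules of $R\oplus R$, which terminates because $R$ is right artinian. For the second, my strategy is to prove that $S$ has no nontrivial idempotent; once this is in hand, taking the witness $s = 0 \in S$ gives $r_{S}(s^{n}) = S$ for every $n$, and since the only idempotent of $S$ is $0$, no $r_{S}(s^{n})$ can equal $fS = 0$ (using $S \neq 0$, since $R \neq 0$). So $r_{S}(s^{n})$ is never generated by an idempotent.

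To show that the only idempotent of $S$ is $0$, I would argue in three stages. \emph{Stage 1:} Condition (i) combined with artinianness yields $R = 2R$. The descending chain of ideals $R \supseteq 2R \supseteq 4R \supseteq \ldots$ stabilises at some $k$, so $2^{k}R = 2^{k+1}R$; given $r \in R$, write $2^{k}r = 2^{k+1}s$, so that $2^{k}(r - 2s) = 0$, and (i) applied to $r - 2s$ gives $r = 2s \in 2R$. \emph{Stage 2:} Combining (ii) with $R = 2R$, $R$ has no nonzero idempotent. If $p = p^{2} \neq 0$, write $p = 2x$; then $2x = p = p^{2} = 4x^{2}$, so $2(2x^{2} - x) = 0$; (i) yields $2x^{2} = x$, and (ii) then forces $x = 0$, so $p = 0$, a contradiction. \emph{Stage 3:} Transfer to $S$. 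Expanding $(ue + vg)^{2} = ue + vg$ gives $u^{2} + v^{2} = u$ and $uv + vu = v$; summing and subtracting these shows $(u+v)^{2} = u + v$ and $(u-v)^{2} = u - v$, so both $u \pm v$ are idempotents in $R$. By Stage 2, $u + v = u - v = 0$; hence $2u = 0$, and (i) forces $u = 0$, and symmetrically $v = 0$.

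I expect the main obstacle to be Stage 1. Condition (i) is a priori a torsion assertion about individual elements, but in order for (ii) --- which only constrains $x$ satisfying $2x^{2} = x$ --- to apply to an arbitrary idempotent $p$, one needs to rescale as $x = p/2$, and this requires that division by $2$ exists in $R$. Upgrading the torsion hypothesis to the surjectivity $R = 2R$ is exactly what DCC on the chain $\{2^{k}R\}_{k\geq 0}$ accomplishes; this is the bridge between (i) and (ii). Once this bridge is secured, Stages 2 and 3 are direct computations, and the failure of the generalised p.p.~property follows immediately from the analysis of $r_{S}(0) = S$.
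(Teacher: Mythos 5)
Your argument is correct under the annihilator form of the definition (which is also the form the paper's own proof uses), but it takes a genuinely different route. The paper works with the single element $e+g$: it computes $(e+g)^n=2^{n-1}e+2^{n-1}g$, observes that $r_S\left((e+g)^n\right)\neq 0$, and then shows directly that any idempotent $ae+bg$ lying in this annihilator must satisfy $b=-a$ (via condition $(i)$) and hence $2a^2=a$ (from $a^2+b^2=a$), which condition $(ii)$ forces to be trivial; so no power of $e+g$ has its annihilator generated by an idempotent. You instead prove the stronger structural statement that $S$ has no nonzero idempotents at all, by a three-step reduction that appears nowhere in the paper: DCC on the chain $\{2^kR\}$ together with $(i)$ gives $R=2R$; this lets you rescale an idempotent $p=2x$ so that $(i)$ and $(ii)$ apply and kill it; and the identities $(u\pm v)^2=u\pm v$ transfer the conclusion from $R$ to $S$. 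All three stages check out, and this intermediate result is strictly stronger than anything the paper establishes. The one weak point is your final witness $x=0$: it certifies failure only for the annihilator formulation (under the ``$x^nR$ is projective'' formulation, $0\cdot S=0$ is projective, and the two formulations quoted in the paper are not actually equivalent without unity), and the identical reasoning would show that $R$ itself is already not generalized right p.p., which rather defeats the purpose of forming the group ring. A more robust finish, available to you at no extra cost, is to combine your ``no nonzero idempotents'' result with a nonzero element of $S$ all of whose powers have nonzero right annihilator --- the role played by $e+g$ in the paper's computation --- since a nonzero right ideal of the form $fS$ would require $f$ to be a nonzero idempotent, which you have excluded.
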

 \begin{proof}
By \cite[p.217, Exercise 2]{intro}, $ S$ is right artinian. But, we show that $S $ is not a \g
right p.p.-ring and hence it is not  a \g \rr\s-ring. Note that for each positive integer $ n
$, $ (e+g)^n=2^{n-1}e+2^{n-1}g $, $ (e-g)\in r_S(e+g)^n $, so $ r_S(e+g)^n\not=0 $. If $ ae+bg\in S $ is a nontrivial idempotent such
that $ ae+bg\in r_S(e+g)^n $, then we must have $ 2^{n-1}a+2^{n-1}b=0
$, $ ab+ba=b $ and $ a^2+b^2=a $. So $ b=-a  $, then $
2a^2=a$ which contradicts the assumption that the equation $
2x^2-x=0 $ has no nontrivial solutions in $ R $. Thus  $ r_S(e+g)^n
$ can not be generated by an idempotent of $ S $.
 \end{proof}
 \begin{proposition}\label{T(R)1}
 If a ring $R$ satisfies the assumptions of Theorem \ref{conter}, then the triangular matrix ring $T_n(R)$   also satisfies these properties.
 \end{proposition}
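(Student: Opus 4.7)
The plan is to verify all four hypotheses of Theorem \ref{conter} for $T_n(R)$ one at a time: non-unitality, right artinianness, property (i), and property (ii). Non-unitality is immediate, since a multiplicative identity $E \in T_n(R)$ would force each diagonal entry $E_{ii}$ to act as a two-sided identity on $R$, contradicting the hypothesis that $R$ is non-unital. Right artinianness of $T_n(R)$ follows from the standard filtration of $T_n(R)$ by two-sided ideals supported on an increasing set of matrix positions, whose successive quotients are isomorphic to $R$ as right $R$-modules; since $R$ is right artinian, so is each factor, and hence so is $T_n(R)$. Property (i) transfers entrywise: if $2^k A = 0$ for some $A = (a_{ij}) \in T_n(R)$, then $2^k a_{ij} = 0$ for each $i,j$, and (i) for $R$ forces $a_{ij} = 0$, so $A = 0$.

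The real work lies in verifying property (ii). Suppose $X = (x_{ij}) \in T_n(R)$ (so $x_{ij} = 0$ for $i > j$) satisfies $2X^2 - X = 0$. I would first compare diagonal entries: in an upper triangular product, the $(i,i)$-entry of $X^2$ is $\sum_k x_{ik} x_{ki}$, which reduces to $x_{ii}^2$ because $x_{ik} = 0$ for $k < i$ and $x_{ki} = 0$ for $k > i$. Hence $2 x_{ii}^2 = x_{ii}$, and property (ii) for $R$ forces $x_{ii} = 0$ for every $i$. Therefore $X$ is strictly upper triangular, and consequently $X^n = 0$. Now multiply the relation $X = 2X^2$ on the right by $X^{k-1}$ to obtain $X^k = 2X^{k+1}$; iterating and substituting back yields $X = 2^{k-1} X^k$ for every $k \geq 1$, and taking $k = n$ gives $X = 2^{n-1} X^n = 0$.

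There is no serious obstacle here; the proof is essentially a bookkeeping exercise. The only subtle point worth noting is that in the treatment of property (ii), property (i) of $R$ is not actually invoked to cancel the factor $2^{n-1}$, because the nilpotence of a strictly upper triangular $n \times n$ matrix gives $X^n = 0$ on the nose. The single conceptual observation that drives everything is that the diagonal restriction of $2X^2 = X$ decouples into copies of the equation $2x^2 = x$ in $R$, at which point (ii) for $R$ pushes $X$ into the nilpotent part and the equation collapses.
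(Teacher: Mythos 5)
Your proof is correct, and for the heart of the matter it follows the same strategy as the paper: read off the diagonal of $2X^2=X$, observe that it decouples into $2x_{ii}^2=x_{ii}$ in $R$, and invoke hypothesis $(ii)$ to kill the diagonal. Where you diverge is in finishing: the paper then peels off the superdiagonals one at a time (the $(i,i+1)$ entries of $2X^2=X$ give $a_{12}=\cdots=a_{n-1,n}=0$ once the diagonal vanishes, and so on inductively), whereas you use the global observation that a strictly upper triangular $n\times n$ matrix satisfies $X^n=0$, together with the iteration $X=2X^2=2^{k-1}X^k$, to get $X=2^{n-1}X^n=0$ in one stroke. Your version is slightly cleaner and avoids the entrywise bookkeeping of the ``continuing this process'' step; the paper's version is more self-contained at the level of matrix entries. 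You also verify non-unitality and right artinianness of $T_n(R)$, which are genuinely part of ``the assumptions of Theorem \ref{conter}'' but which the paper's proof silently omits, so your write-up is in fact more complete than the original. The remark that hypothesis $(i)$ is not needed to cancel the $2^{n-1}$ is a nice touch and is correct, since $X^n=0$ holds on the nose.
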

 \begin{proof}
It is clear that  $T_n(R)$ satisfies the assumption $(i)$ of Theorem \ref{conter}. Let \\
$ M=(a_{ij})\in T_n(R) $, where $ a_{ij}=0 $ for $i> j$. Suppose 
$ 2M^2=M $. Then for $ 1\leq i\leq n $, $ 2a_{ii}^2-a_{ii}=0 $, so $ a_{ii}=0 $. Now $ 2M^2=M $ implies that $ a_{12}=a_{23}=\cdots=a_{n-1,n}=0 $. By continuing this process, we conclude that $ a_{ij}=0 $, for all $1\leq i \leq j\leq n$, so $M=0$ and   $T_n(R)$ satisfies the assumption $(ii)$ of Theorem \ref{conter}.
 \end{proof}
 
 We now provide some examples which satisfy the assumptions of Theorem \ref{conter}, see \cite{Fine} for more details.
 \begin{example}\label{ex}
 Let $ R $ be  one of the following finite rings of order $ p^2 $:
\begin{itemize}
\item[$(1)$] $A=\langle a \mid p^2a=0,a^2=pa\rangle$;
\item[$(2)$]$B=\langle a \mid p^2a=0,a^2=0\rangle$;
\item[$(3)$]$C=\langle a,b \mid pa=pb=0,a^2=b, ab=0\rangle$;
\item[$(4)$] $D=\langle a,b \mid pa=pb=0,a^2=b^2=0\rangle$.
\end{itemize}
where $p$ is a prime number $\not= 2$.  Now, let $S = RG$ be the
group ring of the group $G =\{e,g\}$ of order $ 2 $ over the ring $
R $. Since $ S$ is a finite ring, $ S $ is artinian. Also, since characteristic of $ R $ is $ p^2 $ and $ (p^2,2^n)=1 $,
for each $ n $, $ R $ satisfies Condition $ (i) $ of Theorem
\ref{conter}. \par
If $ R=A $ and   $x\in R$, then
$$ 2x^2=
\begin{cases}
0  &  \textrm{ if } x=kp, \textrm{ where }k=1,\cdots,p-1\\
2px  &  \textrm{ otherwise.}
\end{cases}
$$\par
If $ R=B $ and   $x\in R$, then $ 2x^2=0$.
\par If $ R=C $ and   $x\in R$, then
$$ 2x^2=\begin{cases}
0  &  \textrm{ if } x=kb,k=1,\cdots,p-1  \\
mb \textrm{ (for some } m)  &  \textrm{ otherwise.}
\end{cases}
$$  \par
  If $ R=D $ and   $x\in R$, then $ 2x^2=0$.\par
Thus $ R $ satisfies Condition $ (ii) $ of Theorem
\ref{conter}. So $ S $  is an
artinian ring which is not \g right p.p.\\

 Since by Proposition~\ref{T(R)1}, $T_n(R)$ satisfies the assumptions of Theorem \ref{conter},  the group ring $T_n(R)G$ of
the group $G =\{e,g\}$ of order $ 2 $ over the  triangular matrix ring $ T_n(R) $, for each $n\geq 2$,  is also an
artinian ring which is not \g right p.p.
\end{example}

Let $R$ be a ring. Consider the subring  $T(R, n)$ of the triangular matrix ring  $T_n(R)$, with $n\geq 2$, consisting  all $n$ by $n$ triangular matrices with constant diagonals. We can denote elements of $T(R, n)$ by
$(a_1, a_2, \ldots , a_n)$, then $T(R, n)$ is a ring with addition pointwise and multiplication
given by  $$(a_1, a_2, \ldots , a_n)(b_1, b_2, \ldots , b_n)=  (a_1b_1,
a_1b_2+a_2b_1, \ldots
,a_1b_{n}+a_2b_{n-1}+\cdots+a_{n}b_1),$$ for each $a_i, b_j\in
R$.

On the other hand, there is a ring isomorphism $\varphi : R[x]/(x^n) \rightarrow T (R, n)$, given by $$\varphi(
 a_1+a_2x +\cdots+a_nx^{n-1}+(x^n)) = (a_1, a_2,\ldots , a_{n}),$$ with
$a_i \in R$ and $ 1 \leq i \leq n $. So $T (R, n) \cong
R[x]/(x^n)$, where $R[x]$ is the ring of polynomials in an indeterminant $ x $  and $(x^n)$ is the ideal generated by $x^n$.

\begin{proposition}\label{triangular}
Let $ R $ be an abelian ring.  Then $  R$ is   generalized right p.p.  if and only if $T(R, n)$ is   generalized right p.p.
\end{proposition}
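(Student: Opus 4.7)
The plan is to prove the two implications separately, using the identification $T(R,n) \cong R[x]/(x^n)$ whenever convenient. The $(\Leftarrow)$ direction is straightforward and does not use abelianness: given $r \in R$, consider $\bar r = (r, 0, \ldots, 0) \in T(R, n)$, whose powers $\bar r^M = (r^M, 0, \ldots, 0)$ have right annihilator equal to the set of tuples $(c_1, \ldots, c_n)$ with $r^M c_i = 0$ for every $i$. By hypothesis this equals $E\, T(R, n)$ for some idempotent $E = (e_1, \ldots, e_n)$, and comparing first coordinates of $E \in r_{T(R,n)}(\bar r^M)$ and of arbitrary elements of $E\, T(R,n)$ yields $r_R(r^M) = e_1 R$ with $e_1^2 = e_1$.

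For $(\Rightarrow)$, I would first record a stabilization fact: in any abelian generalized right p.p.\ ring, if $r_R(b^k) = eR$ for an idempotent $e$ (necessarily central), then $r_R(b^j) = eR$ for all $j \geq k$. This follows by a short induction using centrality of $e$: if $b^{k+1} c = 0$, then $bc \in r_R(b^k) = eR$, so $bc = e(bc) = b(ec)$, hence $b(c - ec) = 0$, placing $c - ec$ in $r_R(b) \subseteq eR$ and therefore $c \in eR$. Given $a = (a_1, \ldots, a_n) \in T(R, n)$, apply this to $a_1$ to get $k$ and a central idempotent $e \in R$ with $r_R(a_1^j) = eR$ for all $j \geq k$. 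The candidate idempotent in $T(R, n)$ is $\bar e = (e, 0, \ldots, 0)$, which is central, and I will show $r_{T(R,n)}(a^M) = \bar e\, T(R, n)$ for $M \geq kn$.

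For the inclusion $\bar e\, T(R, n) \subseteq r_{T(R,n)}(a^M)$, I use that $\bar e a^M = (\bar e a)^M$ by centrality and work inside $eR[x]/(x^n) \cong \bar e\, T(R, n)$: modulo the nilpotent ideal $(x)$, the image of $\bar e a$ is $e a_1$, which is nilpotent of index $\leq k$, so $(\bar e a)^k \in (x) \cdot eR[x]/(x^n)$ and $(\bar e a)^{kn} \in (x)^n = 0$. For the reverse inclusion, I would take $c = (c_1, \ldots, c_n) \in r_{T(R,n)}(a^M)$, write $a^M = (d_1, \ldots, d_n)$ with $d_1 = a_1^M$, and prove $c_j \in eR$ by induction on $j$. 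The base case is immediate from $d_1 c_1 = 0$. The $j$-th coordinate of $a^M c = 0$ reads $\sum_{i=1}^j d_i c_{j-i+1} = 0$; left-multiplying this relation by $a_1^k$ and using $c_{j-i+1} = e c'_{j-i+1}$ from the inductive hypothesis together with centrality of $e$, every term with $i \geq 2$ acquires a factor $a_1^k e = 0$ and vanishes, leaving $a_1^{M+k} c_j = 0$, so $c_j \in r_R(a_1^{M+k}) = eR$ by the stabilization fact.

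The main obstacle is this inductive step for the reverse containment: although $a^M \bar e = 0$, the coefficients $d_i$ of $a^M$ with $i \geq 2$ are complicated sums of noncommutative products of the $a_l$'s and do not themselves factor through $e$. The essential trick is to left-multiply by $a_1^k$ in order to exploit the single identity $a_1^k e = 0$ together with centrality of $e$, which is what makes the abelianness hypothesis indispensable in this direction.
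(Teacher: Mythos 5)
Your argument is correct and is essentially the argument the paper relies on: the paper's ``proof'' is only a pointer to \cite[Proposition 3]{11}, and your write-up (central idempotent $e$ with $r_R(a_1^k)=eR$, stabilization of annihilators of powers, the lift $\bar e=(e,0,\ldots,0)$, and the exponent $kn$ to absorb the nilpotent part of $R[x]/(x^n)$) is precisely the standard Huh--Kim--Lee computation, with the left-multiplication by $a_1^k$ correctly exploiting centrality of $e$ in the inductive step. No gaps; you have in fact supplied the details the paper omits.
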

\begin{proof}
The proof is similar to that of \cite[Proposition 3]{11}.
\end{proof}
\begin{proposition}\label{artinian}
A ring $  R$ is  right (left) artinian  if and only if the ring $T(R, n)$ is  right (left) artinian.
\end{proposition}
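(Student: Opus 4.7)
The plan is to exploit the ring isomorphism $T(R,n)\cong R[x]/(x^n)$ stated just before the proposition and to filter $T(R,n)$ by powers of the nilpotent ideal $J$ corresponding to $(x)/(x^n)$. Concretely, set $J^0=T(R,n)$ and $J^k\subseteq T(R,n)$ to be the (two-sided) ideal generated by $x^k$; these give a finite descending chain
$$T(R,n)=J^0\supseteq J^1\supseteq \cdots \supseteq J^{n-1}\supseteq J^n=0.$$
Each successive quotient $J^k/J^{k+1}$ is annihilated by $J$ on both sides, so the right $T(R,n)$-module structure on $J^k/J^{k+1}$ factors through the quotient $T(R,n)/J\cong R$. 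In particular, the right $T(R,n)$-submodules of $J^k/J^{k+1}$ are exactly its right $R$-submodules, and multiplication by $x^k$ yields an isomorphism of right $R$-modules $R\cong J^k/J^{k+1}$.

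For the forward direction, assume $R$ is right artinian. Then each factor $J^k/J^{k+1}$ is artinian as a right $R$-module by hypothesis, hence also artinian as a right $T(R,n)$-module by the previous observation. Using the short exact sequences
$$0\longrightarrow J^{k+1}\longrightarrow J^k\longrightarrow J^k/J^{k+1}\longrightarrow 0$$
and the standard fact that an extension of two right-artinian modules is right artinian, a downward induction on $k$ (starting from $J^{n-1}\cong R$ and ending at $J^0=T(R,n)$) shows that $T(R,n)$ is artinian as a right module over itself, i.e.\ right artinian. The left case is completely symmetric, using the same filtration and the fact that $J$ is also a two-sided ideal.

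For the reverse direction, assume $T(R,n)$ is right artinian. The surjective ring homomorphism $T(R,n)\to R$ sending $(a_1,a_2,\ldots,a_n)\mapsto a_1$ (with kernel $J$) allows us to pull back any descending chain of right ideals of $R$ to a descending chain of right ideals of $T(R,n)$; since pullback along a surjection is strictly order-preserving, stabilization in $T(R,n)$ forces stabilization in $R$. Hence $R$ is right artinian. Again the left case is identical.

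I do not expect a serious obstacle: the only point requiring a moment of care is the identification of right $T(R,n)$-submodules of $J^k/J^{k+1}$ with right $R$-submodules, which is immediate from $J\cdot J^k\subseteq J^{k+1}$. Everything else is the routine extension/quotient behaviour of the artinian condition.
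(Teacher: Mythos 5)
Your argument is correct, but note that the paper does not actually carry out a proof here: it simply defers to \cite[Corollary 4.3]{nasr}, which establishes the transfer of the artinian condition between $R$ and $R[x]/(x^n)$, so your write-up supplies the details that the paper leaves to the reference (and the filtration you use is essentially the standard argument one would find there). Two small points of care. First, since the rings in this paper are allowed to be non-unital, the element $x^k$ need not lie in $T(R,n)\cong R[x]/(x^n)$ at all (it would correspond to the tuple with $1$ in position $k+1$), so ``the ideal generated by $x^k$'' is not quite the right description; you should instead define $J^k$ directly as the set of tuples $(a_1,\ldots,a_n)$ with $a_1=\cdots=a_k=0$, which the multiplication rule displayed before the proposition shows is a two-sided ideal, and which gives exactly the chain you wrote down. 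With that definition your key identifications -- that $J\cdot J^k\subseteq J^{k+1}$, that $J^k/J^{k+1}\cong R$ as right $R$-modules via $a\mapsto(0,\ldots,0,a,0,\ldots,0)$, and that $T(R,n)$-submodules of $J^k/J^{k+1}$ coincide with $R$-submodules because $T(R,n)\to T(R,n)/J\cong R$ is surjective -- all go through verbatim, and the extension argument for artinian modules is insensitive to the absence of a unit. Second, for the converse it is worth saying explicitly that the preimage under the surjection $(a_1,\ldots,a_n)\mapsto a_1$ of a right ideal of $R$ is a right ideal of $T(R,n)$ and that distinct ideals have distinct preimages; this is exactly the observation you make, and it correctly yields that $R$ inherits the descending chain condition. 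So the proposal is sound; it is simply more explicit than the paper, at the cost of one imprecise phrase in the non-unital setting.
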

\begin{proof}
The proof is similar to that of \cite[Corollary 4.3]{nasr}.
\end{proof}
\begin{example}\label{ex50}
Let $ S $ be the group ring considered in Example \ref{ex}. Then by Example \ref{ex}, Propositions \ref{triangular} and  \ref{artinian},     the  ring   $\mathfrak{S}:=T(S, n)$
 is an artinian  ring which is  not  \g right p.p.
\end{example}

\begin{proposition}[\cite{ma3}, Theorem 3.14]\label{group}
Let $R$ be a ring and $G$ be a group. If the group ring $RG$ is generalized
right p.p., then so is $R$.
\end{proposition}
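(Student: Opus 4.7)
The plan is to transfer an idempotent generator from $RG$ to $R$ across the augmentation map $\varepsilon\colon RG \to R$ sending $\sum_{g\in G} a_g g \mapsto \sum_{g\in G} a_g$, which is a surjective ring homomorphism. Given $r\in R$, I would view $r$ as the element $r\cdot 1_G$ of $RG$ (where $1_G$ denotes the identity of $G$) and use the generalized right p.p.\ hypothesis on $RG$ to exhibit an idempotent of $R$ generating $r_R(r^n)$ for some $n\geq 1$.

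The first observation is that, because $r^n\in R$ commutes with every element of $G$, the right annihilator of $r^n$ in $RG$ has the explicit description
\[ r_{RG}(r^n) \;=\; \Bigl\{ \sum_{g\in G} b_g g : b_g \in r_R(r^n) \Bigr\}, \]
since $r^n\cdot \sum_g b_g g = \sum_g (r^n b_g) g$ vanishes iff each $r^n b_g = 0$. Using the hypothesis, I would pick $n\geq 1$ and an idempotent $f\in RG$ with $r_{RG}(r^n)=f(RG)$. Writing $f=\sum_g f_g g$, the displayed identification forces $f_g\in r_R(r^n)$ for every $g$, so $\varepsilon(f)=\sum_g f_g$ already lies in $r_R(r^n)$. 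Applying the ring homomorphism $\varepsilon$ to the equality $f(RG)=r_{RG}(r^n)$ yields $\varepsilon(f)\,R = r_R(r^n)$, while $\varepsilon(f)^2=\varepsilon(f^2)=\varepsilon(f)$ shows that $\varepsilon(f)$ is an idempotent of $R$. Hence $r_R(r^n)$ is generated, as a right ideal, by the idempotent $\varepsilon(f)$, and $R$ is generalized right p.p.

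No serious obstacle is expected. The only care required is in verifying the displayed identification of $r_{RG}(r^n)$ and the fact that the surjective ring homomorphism $\varepsilon$ maps the right ideal $f(RG)$ onto $\varepsilon(f)R$; both are routine consequences of $\varepsilon$ being a surjective ring map and of $r_R(r^n)$ being additively closed. The argument is a standard augmentation-map transfer and uses no special feature of the group $G$, so it works in the generality stated.
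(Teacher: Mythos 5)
Your argument is correct. Note first that the paper itself gives no proof of this proposition: it is imported verbatim as Theorem 3.14 of the cited reference \cite{ma3}, so there is no in-paper argument to compare against. Your augmentation-map proof is a clean, self-contained justification: the identification $r_{RG}(r^n\cdot 1_G)=\{\sum_g b_g g : b_g\in r_R(r^n)\}$ is valid (coefficients from $R$ commute past group elements, and an element of $RG$ vanishes iff all its coefficients do), and applying the surjective homomorphism $\varepsilon$ to the set equality $f(RG)=r_{RG}(r^n)$ does give $\varepsilon(f)R=r_R(r^n)$ with $\varepsilon(f)$ idempotent. The only point deserving explicit mention in this non-unital setting is that ``the right ideal generated by the idempotent $f$'' coincides with $f(RG)$ because $f=f^2\in f(RG)$, which is also what lets you conclude $f\in r_{RG}(r^n)$ and hence $\varepsilon(f)\in r_R(r^n)$; you use this implicitly and it is harmless. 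Your observation that no finiteness or special property of $G$ is needed is likewise correct, since elements of $RG$ have finite support.
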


Repeatedly applying Proposition \ref{group} to existing examples, such as Example \ref{ex} or Example \ref{ex50}, one can  construct new  examples from old.
\begin{example}
Let $ S $ be the group ring considered in Example \ref{ex} or the ring $\mathfrak{S}$ in the Example  \ref{ex50} and $H$ be any finite group. Then by Proposition \ref{group}, Examples \ref{ex}, \ref{ex50} and \cite[~p.~217, Exercise 2]{intro},     the new group ring  $SH$  (respectively $\mathfrak{S}H$)
 is an artinian  ring that is also not  \g right p.p.
\end{example}

Let $ R $ be a ring and $ G $ be a group.  If  $ R $ has an involution $ \s $ itself then
  we have a natural involution $  \divideontimes$ on the group ring $ RG $, induced by the inversion in the
group $G$, given by
\begin{equation*}
(\sum a_gg)^{\divideontimes}=\sum a_g^{\ast} g^{-1}.
\end{equation*}
\indent Recall from \cite{ma3}, that a ring $ R $ with an involution $ \ast $ is called    a \textit{generalized~\rr~\s-ring}
  if for each  $x\in R$, the right annihilator
$r_R(x^n)$  is generated by a projection  for some positive integer $n$,  depending on $x$. These rings are   generalization of Rickart \s-rings. There are large classes of both finite and infinite dimensional Banach \s-algebras which are generalized
\rr \s-rings, but they are not Rickart $\ast$.
\begin{theorem}\label{conter2}
 Let $ R $ be a non-unital  right  artinian   ring   with the following properties:
\begin{itemize}
\item[$(i)$] For $ a\in R $ and each positive  integer $ n $, $ 3^na=0 $ implies that $ a=0 $;
\item[$(ii)$] The equation $ 3x^2+x=0 $ has only the trivial solution  in $ R $;
\end{itemize}
and let $S = RG$ be the group ring of the group $G =\{e,g,g^2\}$ of
order $ 3 $ over the ring $ R $. Then the group ring $ S $ is a right artinian
ring which is not  \g Rickart $
\ast $.
\end{theorem}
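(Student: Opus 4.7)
The plan is to parallel the argument of Theorem \ref{conter}, replacing the order-$2$ group by the order-$3$ group $G = \{e, g, g^2\}$ and using the projection hypothesis (rather than mere idempotency) to cut down the remaining degrees of freedom. Artinianness of $S = RG$ follows exactly as before from \cite[p.~217, Exercise 2]{intro}. What remains is to exhibit an element $x \in S$ such that $r_S(x^n)$, for every positive integer $n$, fails to be generated by a projection of $S$.

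I would take $x = e + g + g^2$. Using $g^3 = e$, a direct expansion gives $(e+g+g^2)^2 = 3(e+g+g^2)$, hence by induction $x^n = 3^{n-1}(e+g+g^2)$. The same identity yields $(e-g)x = 0$, so $e - g \in r_S(x^n)$ and the right annihilator is nonzero for every $n$.

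Next, I would argue by contradiction: suppose $r_S(x^n)$ is generated by a projection, equivalently, contains a nontrivial projection $p = ae + bg + cg^2$. Membership of $p$ in the annihilator gives $x^n p = 3^{n-1}(a+b+c)(e+g+g^2) = 0$, which by hypothesis (i) forces $a+b+c = 0$. The projection condition $p^{\divideontimes} = p$, with $\divideontimes$ the induced involution from inversion on $G$, yields $a = a^\ast$ and $c = b^\ast$. Expanding $p^2 = p$ and equating the coefficients of $e$, $g$, $g^2$ produces the three identities
\begin{equation*}
a^2 + bc + cb = a, \quad ab + ba + c^2 = b, \quad ac + ca + b^2 = c.
\end{equation*}
Substituting $a = -(b+c)$ into the middle identity and using $c = b^\ast$ (which, in the setting where $\ast$ on $R$ is the identity, gives $b = c$ directly) reduces it to $3b^2 + b = 0$. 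By hypothesis (ii), $b = 0$, hence $c = a = 0$, contradicting the nontriviality of $p$.

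The main obstacle I anticipate is isolating a single-variable equation of the form $3y^2 + y = 0$ in full non-commutative generality: subtracting the third identity from the second after $\ast$-conjugating yields only the matched relation $3c^2 + c = 3b^2 + b$, which does not by itself trigger condition (ii). For the examples to which the theorem is applied (the commutative rings of Example \ref{ex}, taken with the identity involution, and their triangular analogs) the symmetry $b = c$ is automatic, so the reduction from the middle equation to $3b^2 + b = 0$ is immediate, exactly mirroring the reduction $b = -a$, $2a^2 - a = 0$ of Theorem \ref{conter}.
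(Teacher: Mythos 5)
Your proposal is correct and follows essentially the same route as the paper's own proof: the same witness $e+g+g^2$, the same computation $(e+g+g^2)^n=3^{n-1}(e+g+g^2)$ with $e-g$ showing the annihilator is nonzero, the same use of hypothesis $(i)$ to force $a+b+c=0$, and the same reduction of the projection equations to $3b^2+b=0$, contradicting $(ii)$. The only divergence is that you are more explicit than the paper about the involution on $R$ (the paper silently computes $(ae+bg+cg^2)^{\divideontimes}=ae+cg+bg^2$, i.e.\ takes $\ast=\mathrm{id}$ on coefficients so that $b=c$); this is a fair caveat, not a different argument.
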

 \begin{proof}
By \cite[p.217, Exercise 2]{intro}, $ S $ is right artinian. But, we show that $ S $ is not a \g
\rr\s-ring.  Note that for each positive integer $ n
$, $ (e+g+g^2)^n=3^{n-1}e+3^{n-1}g+3^{n-1}g^2 $, $ (e-g)\in r_S(e+g)^n $, so $ r_S(e+g+g^2)^n\not=0 $. If $ ae+bg+cg^2\in S $ is a nontrivial projection such
that $ ae+bg+cg^2\in r_S(e+g+g^2)^n $, then we must have
\begin{align*}
&(3^{n-1}e+3^{n-1}g+3^{n-1}g^2 )( ae+bg+cg^2)=0;\\
&( ae+bg+cg^2)( ae+bg+cg^2)=( ae+bg+cg^2);\\
&( ae+bg+cg^2)=( ae+bg+cg^2)^{\s}=( ae+cg+bg^2).
\end{align*}
So $ 3^{n-1}a+3^{n-1}b+3^{n-1}c=0
$, $ a^2+bc+cb=a $, $ c^2+ab+ba=b $, $ b^2+ac+ca=c $ and $ b=c $. Then $ a=-2b$ and  $  b^2+ab+ba=b $. Hence  $ 3b^2+b=0
,$ which contradicts the assumption that the equation $
3x^2+x=0 $ has no nontrivial solutions in $ R $. Thus  $ r_S(e+g+g^2)^n
$ can not be generated by a projection of $ S$.
 \end{proof}
 \begin{proposition}\label{T(R)2}
 If a ring $R$ satisfies the assumptions of Theorem \ref{conter2}, then for each $n$, the  ring $T_n(R)$   also satisfies    these properties.
 \end{proposition}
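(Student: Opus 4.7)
The plan is to mirror the proof of Proposition \ref{T(R)1} with the defining equation $3x^2 + x = 0$ replacing $2x^2 - x = 0$, and with $3^n$ replacing $2^n$ in the torsion-freeness condition. So the argument has two parts corresponding to conditions (i) and (ii).

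For condition (i), I would simply take $M = (a_{ij}) \in T_n(R)$ with $3^n M = 0$. Reading off entries, $3^n a_{ij} = 0$ for every $i \leq j$, and hypothesis (i) on $R$ forces each $a_{ij} = 0$. Thus $T_n(R)$ satisfies (i).

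For condition (ii), let $M = (a_{ij}) \in T_n(R)$ with $a_{ij} = 0$ for $i > j$, and assume $3M^2 + M = 0$. Looking at the diagonal entries, for each $1 \leq i \leq n$ we get $3a_{ii}^2 + a_{ii} = 0$, so by hypothesis on $R$ each $a_{ii} = 0$. I would then induct on the superdiagonal index $k = j - i$: assuming $a_{ij} = 0$ for all pairs with $0 \leq j - i < k$, compute the $(i, i+k)$-entry of $M^2$ as $\sum_{t=i}^{i+k} a_{it} a_{t, i+k}$. The inner terms ($i < t < i+k$) vanish since they are products of entries lying on strictly lower superdiagonals (already known to be zero by the inductive hypothesis), while the two boundary terms $a_{ii} a_{i,i+k}$ and $a_{i,i+k} a_{i+k,i+k}$ vanish because the diagonal is zero. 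Hence $(M^2)_{i, i+k} = 0$, and the equation $3M^2 + M = 0$ on this entry reduces to $a_{i, i+k} = 0$. The induction terminates at $k = n - 1$ and gives $M = 0$, verifying (ii) for $T_n(R)$.

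I do not expect any genuine obstacle: the argument is a mechanical adaptation of Proposition \ref{T(R)1}, and the only place that deserves care is checking that the mixed terms in $(M^2)_{i,i+k}$ really do vanish under the inductive hypothesis, which is where upper-triangularity combined with the vanishing diagonal does the work.
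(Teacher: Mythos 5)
Your proposal is correct and is exactly what the paper intends: the paper's proof of Proposition \ref{T(R)2} simply says it is similar to that of Proposition \ref{T(R)1}, and your argument is that proof transcribed with $3x^2+x=0$ in place of $2x^2-x=0$ and $3^n$ in place of $2^n$. Your superdiagonal induction for condition $(ii)$, including the check that the mixed terms of $(M^2)_{i,i+k}$ vanish, is a valid (and slightly more carefully written) version of the "continuing this process" step in the paper.
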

 \begin{proof}
 The proof is similar to that of Proposition \ref{T(R)1}.
 \end{proof}
The following examples also satisfy the assumptions of Theorem \ref{conter2}.
\begin{example}\label{ex2}
 Let $ R $ be  one of the following finite rings of order $ p^2 $:
\begin{itemize}
\item[$(1)$] $A=\langle a \mid p^2a=0,a^2=pa\rangle$;
\item[$(2)$]$B=\langle a \mid p^2a=0,a^2=0\rangle$;
\item[$(3)$]$C=\langle a,b \mid pa=pb=0,a^2=b, ab=0\rangle$;
\item[$(4)$] $D=\langle a,b \mid pa=pb=0,a^2=b^2=0\rangle$.
\end{itemize}
where $p\not= 3$ is a prime number.  Now, let $S = T_n(R)G$ be the
group ring of the group $G =\{e,g,g^2\}$ of order $ 3 $ over the ring $
T_n(R) $. Since $ S $ is a finite ring, $ S$ is artinian. Also, since characteristic of $ R $ is $ p^2 $ and $ (p^2,3^n)=1 $,
for each $ n $, $ R $ satisfies Condition $ (i) $ of Theorem
\ref{conter2}. \par
If $ R=A $ and   $x\in R$, then
$$
 3x^2=
\begin{cases}
0  &  \textrm{ if } x=kp, \textrm{ where }k=1,\cdots,p-1\\
mb \textrm{ (for some } m)  &  \textrm{ otherwise.}
\end{cases}$$
\par
\indent  If $ R=B $ and   $x\in R$, then $ 3x^2=0$. \par
  If $ R=C $ and   $x\in R$, then
$$ 3x^2=\begin{cases}
0  &  \textrm{ if } x=kb,k=1,\cdots,p-1  \\
mb \textrm{ (for some } m)  &  \textrm{ otherwise.}
\end{cases}
$$ \par
   If $ R=D $ and   $x\in R$, then $ 3x^2=0$.\par
   
   Thus $ R $ satisfies Condition $ (ii) $ of Theorem
\ref{conter2}. So $ S $  is an
artinian ring which is not  \g Rickart $ \ast $.\\
\newpage
We define an involution on $T_n(R)$  given by   
 $ A^{\ast}=(a_{\ell k}^{\ast}) $, where $ \ell=n-j+1 $ and $ k=n-i+1 $, for $ A=(a_{ij})\in T_n(R) $ (see \cite{ma3}).
 Since by Proposition~\ref{T(R)2}, $T_n(R)$ satisfies the assumptions of Theorem \ref{conter2},  the group ring $T_n(R)G$ of
the group $G =\{e,g,g^2\}$ of order $ 3 $ over the  triangular matrix ring $ T_n(R) $, for each $n\geq 2$,  is also an
artinian ring which is not  \g Rickart-$ \ast $.
\end{example}

\bibliographystyle{amsplain}

\end{document}